\newtheorem{theorem}{Theorem}[section]
\newtheorem{definition}[theorem]{Definition}
\newtheorem{proposition}[theorem]{Proposition}
\newtheorem{lemma}[theorem]{Lemma}
\newtheorem{remark}[theorem]{Remark}
\title{Twists of non-hyperelliptic curves}
\author{Elisa Lorenzo Garc\'ia}
\address{Universiteit Leiden\\ 
Mathematisch Instituut\\
Niels Bohrweg 1\\
2333 CA Leiden (The Netherlands)}
\email{e.lorenzo.garcia@math.leidenuniv.nl}
\let\@wraptoccontribs\wraptoccontribs
\thanks{}
\subjclass[2010]{11G30, 12F12, 14H99}
\keywords{}
\begin{document}


\begin{abstract} In this paper we show a method for computing the set of twists of a non-singular projective curve defined over an arbitrary (perfect) field $k$. The method is based on  a correspondence between twists and solutions to a Galois embedding problem. When in addition, this curve is non-hyperelliptic we show how to compute equations for the twists. If $k=\mathbb{F}_q$ the method then becomes an algorithm, since in this case, the Galois embedding problems that appear are known how to be solved. As an example we compute the set of twists of the non-hyperelliptic genus $6$ curve $x^7-y^3z^4-z^7=0$ when we consider it defined over a number field such that $[k(\zeta_{21}):k]=12$. For each twist equations are exhibited.
\end{abstract}


\maketitle

\section{Introduction}

The study of twists of curves can be a very useful tool for understanding some arithmetic problems. For example, it has been proved to be really helpful for exploring the Sato-Tate conjecture \cite{FKRS}, \cite{FLS}, \cite{FS2}, \cite{FS3},\cite{thesis}. As well as for solving some Diophantine equations \cite{PSS} or computing $\mathbb{Q}-$curves realizing certain Galois representations \cite{BFGL}, \cite{diff}.  

The twists of curves of genus $\leq 2$ are well-known.
While the genus $0$ and $1$ cases date back to a long time ago \cite{Sil}, the genus $2$
case is due to the work of Cardona and Quer over number fields \cite{Cart}, \cite{Carp}, and to Cardona over finite fields \cite{CarFF}. All the 
genus $0$, $1$ or $2$ curves are hyperelliptic (at least in the sense that they are not non-hyperelliptic, since genus $0$ and $1$ curves are not usually called hyperelliptic). However, for genus greater than $2$ 
almost all the curves are non-hyperelliptic. Only few twists of genus $3$ curves over number fields have been previously computed \cite{diff}, \cite{PSS}. Over finite fields, more twists of genus $3$ has been computed \cite{MT10}, but, in this case, equations are not given.

We devote the present paper to show a method for computing twists of smooth curves of genus greater than $0$, and in
the particular case of non-hyperelliptic curves we show how to compute equations for the twists. The method is not completely original since it is based on well-known results, but as far as we know this is the first time that all the strategies used for computing twists are joined together and all the gaps are filled in order to produce a systematic method. In particular, when the field of definition of the curve has characteristic different from zero, the method gives rise to an algorithm.

In a forthcoming paper \cite{Lor14}, this method will be useful for computing the twists of all non-hyperelliptic genus $3$ curves defined over any number field.

\subsection{Outline} The structure of this paper is as follows. Section \ref{11} establishes a correspondence between the set of twists of any smooth and irreducible genus $g>0$ curve $C$ defined over a perfect field $k$ and the set of solutions to a Galois embedding problem, see Theorem \ref{main}. In Section \ref{12}, we show how to compute equations of the twists in the particular case in which the curve $C$ is non-hyperelliptic. We do this by studying the action of the Galois group of a certain extension of the field of definition of the curve $C$, in the vector space of regular differentials $\Omega^1(C)$. Section \ref{13} describes in detail the method obtained for computing the twists of non-hyperelliptic curves. First step is computing a canonical model of the curve. The second one is posing the corresponding Galois embedding problem, whose solutions are in bijection with the set of twists, and solving it. In general, if $k$ is a number field, there is not known method for solving a Galois embedding problem over $k$, and this step should be treated case-by-case. We compute the solutions to an infinity family of such problems in Proposition \ref{q}. Nevertheless, if $k$ is a finite field, any Galois embedding problem over $k$ is known how to be solved (e.g. \cite[Chapter 1]{Ser}). The third and last step is computing equations for the twists. Finally, in Section \ref{14} we illustrate the method by computing all the twists of the non-hyperelliptic genus $6$ curve $x^7-y^3z^4-z^7=0$ when it is considered to be defined over a number field such that $[k(\zeta_{21}):k]=12$.

\subsection{Notation}
We now fix some notation and conventions that will be valid through the paper. For any field $F$, we denote by $\bar{F}$ an algebraic closure of $F$, and by $G_F$ the absolute Galois group $\operatorname{Gal}(\bar{F}/F)$. We recurrently consider the action of $G_F$ on several sets, and this action is in general denoted by left exponentiation. For a field $F$, let $\operatorname{GL}_n(F)$ (resp. $\operatorname{PGL}_n(F)$) be the ring of $n$ by $n$ invertible matrices with coefficients in F (resp. that are projective).

By $k$ we always mean a perfect field. All field extensions of $k$ that we consider are contained in a fixed algebraic closure $\bar{k}$. We write $\zeta_n$ to refer to a primitive $n-$th root of unity in $\bar{k}$. When $k$ is a number field, we denote by $\mathcal{O}_k$ the ring of integers of $k$. 

Given a projective, smooth and geometrically irreducible curve $C/k$ we denote by $\operatorname{Aut}(C)$ the grup of automorphisms of $C$ defined over $\bar{k}$. By $K$ we denote the minimal extension $K/k$ where all the automorphism of $C$ can be defined. The $k-$vector space of regular differentials of $C$ is denoted by $\Omega^1(C)$.

When we work with groups, we usually use the SmallGroup Library-GAP \cite{GAP}. Where the group $<N,r>$ denotes the group of order $N$ that appears in the $r-$th position in such library. By $\text{ID}(G)$, we mean the corresponding GAP notation for the group $G$.

\subsection{Aknowledgments} The author would like to thank to Joan-Carales Lario for bringing this problem to her attention and to Francesc Fit\'e for a careful reading of the manuscript and useful comments and suggestions.

\section{Galois embedding problems}
\label{11}

Let $k$ be a perfect field and $C/k$ be a projective curve of genus $g>0$. Let us denote by $K$ the minimal field where all the automorphisms of $C$ can be defined. Let us define the twisting group $\Gamma := \operatorname{Aut}(C)\rtimes \operatorname{Gal}(K/k)$, where $\operatorname{Gal}(K/k)$ acts naturally on $\operatorname{Aut}(C)$, and the multiplication rule is $(\alpha,\sigma)(\beta,\tau)=(\alpha\,^{\sigma}\beta,\sigma\tau)$ \cite{FL}. 

Let us define the following sets:
\begin{equation}\label{e3}
\operatorname{Twist}_k(C):=\left\{ C'/k\operatorname{curve}\mid\exists\:\overline{k}\operatorname{-isomorphism}\:\phi\colon C'\to C\right\} /k\operatorname{-isomorphism},
\end{equation}
\begin{equation}\label{e1}
\operatorname{H}^{1}(G_k,\operatorname{Aut}(C)):=\left\{ \xi\colon G_k \to \operatorname{Aut}(C)\,\,\text{continuous} \mid \xi_{\sigma\tau}=\xi_{\sigma}{}^{\sigma}\xi_{\tau}  \right\}/\sim ,
\end{equation}
where the topology in $G_k$ is the profinite one, and we consider the discrete topology in $\operatorname{Aut}(C)$. Two cocycles are cohomologous $\xi\sim\xi '$, if and only if, there is $\varphi\in\operatorname{Aut}(C)$ such that $\xi_{\sigma}'=\varphi\cdot\xi_{\sigma}\cdot{}^{\sigma}\varphi^{-1}$. We also define
\begin{equation}\label{e2}
\widetilde{\operatorname{Hom}}(G_k,\Gamma):=\left\{  \Psi\colon G_k\to\Gamma\mid\Psi\:\operatorname{epi}_{2}-\operatorname{morphism}  \right\}/\sim ,
\end{equation}
the meaning of $\operatorname{epi}_{2}-\operatorname{morphism}$ is that $\Psi$ is a continuous group homomorphism (again with the profinite and discrete topologies respectively) such that the composition $\pi_2 \cdot \Psi :\, G_k\to\Gamma\to\operatorname{Gal}(K/k)$ is surjective where $\pi_2:\,\Gamma\to\operatorname{Gal}(K/k)$ (resp. $\pi_1$) is the natural projection on the second (resp. first) component of the elements of $\Gamma$. We say that $\Psi\sim\Psi'$ are equivalent if there is $(\varphi,1)\in\Gamma$ such that $\Psi_{\sigma}'=(\varphi,1)\Psi_{\sigma}(\varphi,1)^{-1}$.  

\begin{definition} With notation above, we say that $L$ is the splitting field of the twist $\phi:\,C'\rightarrow C$, if $L$ is the minimal field where, for all $\alpha\in\text{Aut}(C)$, the isomorphisms $\alpha\circ \phi$ are defined. Similarly, we define the splitting field of a cocycle $\xi$ as the field $L$ that satisfies the condition $$\text{Gal}(\bar{k}/L)=\bigcup_{\xi\sim\xi'}\text{Ker}(\xi').$$ Since $\xi$ is continuous, $L$ is well-defined.

 For an element $\Psi\in\widetilde{\operatorname{Hom}}(G_k,\Gamma)$, we define its splitting field as the field $L$ such that $\text{Gal}(\bar{k}/L)=\text{Ker}(\Psi)$.
\end{definition}

Notice that the previous splitting fields are all of them finite extensions of $k$ since we are considering curves of genus greater than $0$, and then the group $\text{Aut}(C)$ is finite.

\begin{theorem}\label{main} There are natural one-to-one correspondences between the following three sets:
$$
\operatorname{Twist}_k(C)\longrightarrow
\operatorname{H}^{1}(G_k,\operatorname{Aut}(C))\longrightarrow
\widetilde{\operatorname{Hom}}(G_k,\Gamma)$$

These correspondences send $\phi$ to $\xi_{\sigma}=\phi\cdot{}^{\sigma}\phi^{-1}$, and $\xi$ to $\Psi_{\sigma}=(\xi_{\sigma},\overline{\sigma})$, where $\overline{\sigma}$ denotes the projection of $ \sigma\in G_k$ onto $\operatorname{Gal}(K/k)$. Moreover, the splitting fields of elements in these three sets are preserved by these correspondences.
\end{theorem}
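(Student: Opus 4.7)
The argument splits into establishing the two bijections separately and then verifying compatibility of the three notions of splitting field. For the first map, I would proceed by the standard Galois-descent calculation. Given a $\bar{k}$-isomorphism $\phi\colon C' \to C$, the formula $\xi_\sigma := \phi \cdot {}^\sigma\phi^{-1}$ lands in $\operatorname{Aut}(C)$ because both $\phi$ and its Galois conjugate go $C' \to C$; the cocycle identity is a direct calculation; and continuity follows from the fact that $\phi$ is definable over a finite extension of $k$. Well-definedness on $k$-isomorphism classes and injectivity are matching one-line computations: a $k$-isomorphism $\psi\colon C''\to C'$ composed with $\phi$ produces the same cocycle, while a cohomology $\xi' = \varphi\,\xi\,{}^\sigma\varphi^{-1}$ makes $\varphi^{-1}\circ\phi$ Galois-equivariant, hence defined over $k$. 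For surjectivity, I would invoke Weil descent: since $C$ is quasi-projective and $\operatorname{Aut}(C)$ is finite, every continuous $1$-cocycle defines an effective descent datum, realized by a twist $C'/k$.

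The second bijection is essentially formal. Setting $\Psi_\sigma := (\xi_\sigma, \bar\sigma)$, the fact that all automorphisms of $C$ are defined over $K$ means the $G_k$-action on $\operatorname{Aut}(C)$ factors through $\operatorname{Gal}(K/k)$, so ${}^\sigma\xi_\tau = {}^{\bar\sigma}\xi_\tau$. The semidirect-product rule then gives
\[
\Psi_\sigma\Psi_\tau = (\xi_\sigma\cdot{}^{\bar\sigma}\xi_\tau,\,\bar\sigma\bar\tau) = (\xi_{\sigma\tau},\,\overline{\sigma\tau}) = \Psi_{\sigma\tau},
\]
so $\Psi$ is a homomorphism; continuity is inherited from $\xi$, and $\pi_2\circ\Psi$ coincides with the canonical surjection $G_k \twoheadrightarrow \operatorname{Gal}(K/k)$, making $\Psi$ an epi$_2$-morphism. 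Conjugation by $(\varphi,1)\in\Gamma$ yields $(\varphi,1)\Psi_\sigma(\varphi,1)^{-1} = (\varphi\,\xi_\sigma\,{}^{\bar\sigma}\varphi^{-1},\bar\sigma)$, matching exactly the cohomology equivalence on $\xi$. For the inverse, given $\Psi$ I would set $\xi_\sigma := \pi_1(\Psi_\sigma)$ and use the homomorphism identity together with surjectivity of $\pi_2\circ\Psi$ to verify the cocycle condition.

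For the splitting fields I would unwind each definition and match them term by term. The field $L_\Psi := \bar{k}^{\ker\Psi}$ satisfies $\ker\Psi = \{\sigma\in G_K : \xi_\sigma = 1\}$, since $\Psi_\sigma = (1,1)$ forces both $\bar\sigma = 1$ and $\xi_\sigma = 1$. On the twist side, $\alpha\circ\phi$ is defined over $L$ precisely when ${}^\sigma(\alpha\circ\phi) = \alpha\circ\phi$ for every $\sigma\in G_L$, which by $\xi_\sigma = \phi\cdot{}^\sigma\phi^{-1}$ rearranges to $\xi_\sigma = \alpha^{-1}\cdot{}^\sigma\alpha$; letting $\alpha$ range over $\operatorname{Aut}(C)$ produces the union $\bigcup_{\xi\sim\xi'}\ker\xi'$ from the cocycle definition, closing the triangle. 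The main obstacle I expect is twofold: first, executing Weil descent cleanly for the surjectivity of the twist-to-cocycle correspondence over an arbitrary perfect base field; and second, being precise about the inverse of the second map, since the epi$_2$-morphism hypothesis only demands surjectivity of $\pi_2\circ\Psi$, so one must verify that each equivalence class contains a representative whose second coordinate is literally the natural projection, making the association to a cocycle canonical.
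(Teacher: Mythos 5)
Your proof follows the same blueprint as the paper: invoke the classical bijection between $\operatorname{Twist}_k(C)$ and $\operatorname{H}^1(G_k,\operatorname{Aut}(C))$, check by hand that $\xi\mapsto(\xi_\sigma,\bar\sigma)$ and $\Psi\mapsto\pi_1\circ\Psi$ are mutually inverse and respect the two equivalence relations, and then match the three notions of splitting field. The paper delegates the first bijection to Silverman and calls the second ``a straightforward computation,'' so you are writing out exactly what the paper elides, which is fine.

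Two points need tightening. First, in the splitting-field comparison you state that letting $\alpha$ range over $\operatorname{Aut}(C)$ produces the \emph{union} $\bigcup_{\xi\sim\xi'}\ker\xi'$. But your own computation shows that, for a fixed $\alpha$, the set of $\sigma$ fixing $\alpha\circ\phi$ is $\{\sigma:\xi_\sigma=\alpha^{-1}\,{}^\sigma\alpha\}$, and since the splitting field of the twist must make \emph{every} $\alpha\circ\phi$ defined over $L$, the group $\operatorname{Gal}(\bar k/L)$ is the \emph{intersection} of those sets over all $\alpha$, not their union. (Taking $\alpha=1$ forces $\xi_\sigma=1$, and then general $\alpha$ forces ${}^\sigma\alpha=\alpha$; so the intersection is $\{\sigma\in G_K:\xi_\sigma=1\}=\ker\Psi$, which is exactly the matching you want. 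The $\bigcup$ in the paper's display appears to be a slip and you should not reproduce it.) Second, the repair you suggest for the ``epi$_2$'' subtlety cannot work: conjugating $(\xi_\sigma,\bar\sigma)$ by $(\varphi,1)$ gives $(\varphi\,\xi_\sigma\,{}^{\bar\sigma}\varphi^{-1},\bar\sigma)$, so $\pi_2\circ\Psi$ is constant on each equivalence class and no choice of representative can normalize it. If $\pi_2\circ\Psi=f$ were a surjection different from the natural projection, the homomorphism identity would give $\xi_{\sigma\tau}=\xi_\sigma\cdot{}^{f(\sigma)}\xi_\tau$, which is a $1$-cocycle for the standard $G_k$-action on $\operatorname{Aut}(C)$ only when $f(\sigma)$ and $\bar\sigma$ act identically on $\operatorname{Aut}(C)$; since $K$ is chosen minimal, $\operatorname{Gal}(K/k)$ acts faithfully and this forces $f(\sigma)=\bar\sigma$. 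The correct reading of $\widetilde{\operatorname{Hom}}(G_k,\Gamma)$, consistent with the embedding-problem diagram in the paper's remark, is that $\pi_2\circ\Psi$ \emph{equals} the natural projection $G_k\twoheadrightarrow\operatorname{Gal}(K/k)$, not merely that it is some surjection. With that reading, your construction of the inverse map is complete.
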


\begin{proof} The correspondence between the set (\ref{e3}) of twists $\operatorname{Twist}_k(C)$ and the first Galois cohomology set (\ref{e1}) is well-known and can be found in \cite[$X.2$ Theorem $2.2$]{Sil}. The statement about the splitting fields follows by definition. So, it only remains to prove that the map  between the sets (\ref{e1}) and (\ref{e2}) is a correspondence. Let us first prove that they are well-defined. Clearly, given $\xi\in\operatorname{Twist}_k(C)$, we have that $\Psi$ defined by $\Psi_{\sigma}:=(\xi_{\sigma},\overline{\sigma})$ defines an element in $\widetilde{\operatorname{Hom}}(G_k,\Gamma)$. Conversely, given an element $\Psi\in\widetilde{\operatorname{Hom}}(G_k,\Gamma)$, we have that $\xi:=\pi_1(\Psi)$ defines an element in $\operatorname{Twist}_k(C)$. Finally, it is a straightforward computation to check that this two maps are one inverse to the other and that they preserve the equivalence relations defined in both sets.
\end{proof}
\begin{remark} Notice that any element $\Psi\in\widetilde{\operatorname{Hom}}(G_k,\Gamma)$ can be reinterpreted as a solution to the following Galois embedding problem:

\begin{align}
\xymatrix{
        &            &                   & G_k \ar@{-->>}_-{\Psi}[dl] \ar@{->>}[d]   &    \\
1 \ar[r] &  \operatorname{Aut}(C)\,\ar@{^{(}->}[r]   & \Gamma \ar[r]_-{\pi}  & \operatorname{Gal}(K/k)  \ar[r] &  1
} \label{gep}
\end{align}
Reciprocally, every solution $\Psi$ of the above embedding problem is an element in $\widetilde{\operatorname{Hom}}(G_k,\Gamma)$ and gives rise to a twist of $C$.
In order to keep track of the equivalence classes of twists we must here consider two solutions $\Psi$ and $\Psi'$ equivalent only under the restricted conjugations allowed in the definition of the set $\widetilde{\operatorname{Hom}}(G_k,\Gamma)$, that is slightly different from the standard one \cite[Section $9.4$]{probe}.
\end{remark}

\section{Equations of the twists}
\label{12}

First of all, remark that a twist is not a curve, it is an equivalence class of curves, so when we say that we compute equation for a twist, what we mean is that we compute equations for some particular curve in the equivalence class. Secondly, notice that a curve can have different models, and a particular model for a non-hyperelliptic curve is its canonical model. The method that we present in this section, it is a method for computing the canonical model of a curve in the equivalence class of a twist defined by a cocycle.

This method is a generalization of the one used by Fern\'andez, Gonz\'alez and Lario \cite{diff}. They used it for computing equations of twists of some particular non-hyperelliptic genus $3$ curves, special case for which the canonical model coincides with the plane model.

Notice that, in our context, finding equations for a twist that is given by a cocycle $\xi\in\operatorname{H}^{1}(G_k,\operatorname{Aut}(C))$, is actually equivalent to computing an inverse map for the correspondence in Theorem \ref{main}
$$
\operatorname{Twist}_k(C)\longrightarrow
\operatorname{H}^{1}(G_k,\operatorname{Aut}(C)).
$$

Let $\Omega^1(C)$ be the $k-$vector space of regular differentials of $C$. Let $\omega_1,...,\omega_g$ be a basis of $\Omega^1(C)$, where $g$ is the genus of $C$ (the existence of such a basis can be deduced from the fact that there always exists a canonical divisor defined over the definition field $k$ of the curves, which is a consequence of \cite[II, Lemma 5.8.1]{Sil}). Given a cocycle $\xi:\, \text{G}_k\to \text{Aut}(C)$ and its splitting field $L$, we consider the extension of scalars $\Omega^1_L(C)=\Omega^1(C)\otimes_{k}L$ which is a $k-$vector space of dimension $g[L:k]$. We can then see the elements of  $\Omega_{L}^{1}(C)$ as sums $\sum \lambda_{i}\omega_{i}$ where $\lambda_{i}\in L$. For every $\sigma\in\operatorname{Gal}(L/k)$, we consider the twisted action on $\Omega_L^1(C)$ defined as follows:
$$
(\sum\lambda_{i}\omega_{i})^{\sigma}_{\xi}:=\sum{}^{\sigma}\lambda_{i}\xi_{\sigma}^{*-1}(\omega_{i}).
$$
Here, $\xi_{\sigma}^{*}\in\operatorname{End}_K(\Omega^1(C))$ denotes the pull-back of $\xi_{\sigma}=\phi\cdot^{\sigma}\phi^{-1}\in\operatorname{Aut}_K(C)$.
One readily checks that
$$
\rho_{\xi}\colon\operatorname{Gal}(L/k)\to\operatorname{GL}(\Omega^1_L(C)),\:\:\:\rho_{\xi}(\sigma)(\omega):=\omega_{\xi}^{\sigma}
$$ 
is a $k-$linear representation. Indeed, since $\xi_{\sigma\tau}^{*}=\,^{\sigma}\xi_{\tau}^{*}\cdot\xi^{*}_{\sigma}$, we have

\begin{align}
\rho_{\xi}(\sigma\tau)(\sum\lambda_{i}\omega_{i})&=\sum{}^{\sigma\tau}\lambda_{i}\xi_{\sigma\tau}^{*-1}(\omega_{i}) \notag\\
&=\sum{}^{\sigma\tau}\lambda_{i}\xi_{\sigma}^{*-1}\cdot^{\sigma}\xi_{\tau}^{*-1}(\omega_{i})\notag \\
&= \rho_{\xi}(\sigma)(\sum{}^{\tau}\lambda_{i}\xi_{\tau}^{*-1}(\omega_{i}))\notag \\
&= \rho_{\xi}(\sigma)\rho_{\xi}(\tau)(\sum\lambda_{i}\omega_{i}).\notag
\end{align}

\begin{lemma}\label{iso} Let $\phi:\,C\to C'$ be a twist such that $\phi\circ^{\sigma}\phi^{-1}=\xi_{\sigma}$. Then, the following $k-$vector spaces are isomorphic as $G_k-$modules: 
$$
\Omega_{L}^{1}(C)_{\xi}^{\operatorname{Gal}(L/k)}\simeq \Omega^{1}(C').
$$
\end{lemma}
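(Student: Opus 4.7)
The plan is to realize the desired isomorphism as the restriction of the $L$-linear pullback along the underlying $\overline{k}$-isomorphism of the twist. Write $\phi\colon C'\to C$ (matching the convention of Theorem~\ref{main}), defined over the splitting field $L$, and set $\xi_\sigma = \phi\circ{}^\sigma\phi^{-1}\in\operatorname{Aut}(C)$. The natural candidate is the pullback
\[
\phi^* : \Omega_L^1(C) \longrightarrow \Omega_L^1(C'),
\]
an $L$-linear bijection because $\phi$ is an isomorphism of smooth projective curves over $L$.

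The first step would be to establish the key equivariance
\[
\phi^*\bigl(\rho_\xi(\sigma)\,\omega\bigr) \;=\; \sigma\bigl(\phi^*\omega\bigr) \qquad \text{for all } \omega\in\Omega_L^1(C),\ \sigma\in\operatorname{Gal}(L/k),
\]
where $\sigma$ on the right denotes the standard tensor action on $\Omega_L^1(C')=\Omega^1(C')\otimes_k L$. Expanding $\omega=\sum \mu_j\omega_j$ in the chosen $k$-basis of $\Omega^1(C)$, the twisted action reads $\rho_\xi(\sigma)\omega=\sum {}^\sigma\mu_j\,\xi_\sigma^{*-1}(\omega_j)$. Rewriting the cocycle identity as $\xi_\sigma^{-1}={}^\sigma\phi\circ\phi^{-1}$ and using functoriality of pullback yields $\phi^*\circ\xi_\sigma^{*-1}=({}^\sigma\phi)^*$; expressing $({}^\sigma\phi)^*(\omega_j)$ in any $k$-basis $\{\eta_i\}$ of $\Omega^1(C')$ then makes the identity check out coefficient by coefficient, since the matrix of $({}^\sigma\phi)^*$ is obtained from that of $\phi^*$ by applying $\sigma$ to its $L$-entries.

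Once the intertwining is in place, restricting $\phi^*$ to $\operatorname{Gal}(L/k)$-invariants produces a $k$-linear map
\[
\phi^* : \Omega_L^1(C)_\xi^{\operatorname{Gal}(L/k)} \longrightarrow \bigl(\Omega^1(C')\otimes_k L\bigr)^{\operatorname{Gal}(L/k)} = \Omega^1(C'),
\]
the final equality being classical Galois descent for a $k$-vector space. Since $\phi^*$ is $L$-linear, bijective, and equivariant for the two Galois actions, its restriction to the invariant subspaces is automatically a $k$-linear isomorphism; one may also confirm this by a dimension count ($g$ on both sides).

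The one step that is not purely formal is the equivariance identity, where one must keep the $L$-linearity of pullback cleanly separate from the semilinear Galois action on coefficients. Once the relation $\phi^*\circ\xi_\sigma^{*-1}=({}^\sigma\phi)^*$ has been extracted from the cocycle identity, the remainder of the argument is a standard invocation of Galois descent.
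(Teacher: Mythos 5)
Your proposal is correct, and it uses the same underlying map as the paper (pullback along $\phi$), but it closes the one nontrivial gap in a genuinely different way. The paper simply asserts that surjectivity of $\phi^*\colon\Omega_L^1(C)_\xi^{\operatorname{Gal}(L/k)}\to\Omega^1(C')$ ``is a consequence of the equivalent result for function fields,'' i.e.\ it reduces to the identification $\overline{k}(C)_\xi^{G_k}=k(C')$ from \cite[X.2]{Sil} and leaves the deduction for differentials to the reader. You instead prove the intertwining identity $\phi^*\circ\rho_\xi(\sigma)=\sigma\circ\phi^*$ directly from $\xi_\sigma^{*-1}=(\phi^*)^{-1}\circ({}^\sigma\phi)^*$, observe that $\phi^*\colon\Omega_L^1(C)\to\Omega_L^1(C')$ is an $L$-linear bijection, and then use the elementary fact that an equivariant bijection restricts to a bijection on invariants together with classical Galois descent $\bigl(\Omega^1(C')\otimes_k L\bigr)^{\operatorname{Gal}(L/k)}=\Omega^1(C')$. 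Your route is more self-contained: it does not need the function-field analogue, it makes the well-definedness of the map explicit (the equivariance you verify is exactly what guarantees that invariants land in $\Omega^1(C')$, a point the paper glosses over), and it exposes that the whole lemma is really just descent applied to the linear isomorphism $\phi^*$. The paper's version is shorter but relies on the reader transporting the twisted-function-field statement down to regular differentials, which is the very step you spell out.
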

\begin{proof} We claim that the map $\Omega_{L}^{1}(C)_{\xi}^{\operatorname{Gal}(L/k)}\rightarrow \Omega^{1}(C'):\,\omega\rightarrow \phi^{*}(\omega)$ is an isomorphism of  $G_k-$modules. The only non-trivial fact is the surjectivity. But this is a consequence of the equivalent result for function fields. 
Recall that the function field $k(C')$ may be reinterpreted as the fixed field $\overline{k}(C)_{\xi}^{G_{k}}$ where the action of the Galois group $G_{k}$ on $\overline{k}(C)$ is twisted by $\xi$ according to $f_{\xi}^{\sigma}:=f\cdot\xi_{\sigma}$ \cite[X.$2$]{Sil}.
\end{proof} 

We identify the previous vector spaces via an isomorphism as in Lemma \ref{iso}, so, for explicit computations, we can use
\begin{align}\label{CalcBase}
\Omega^{1}(C')=\bigcap_{\sigma\in\operatorname{Gal}(L/k)}\operatorname{Ker}(\rho_{\xi}(\sigma)-\operatorname{Id}).\end{align}

Consider the canonical morphism and the canonical model $\phi_K:\,C\rightarrow \mathcal{C}\subset \mathbb{P}^{g-1}$ given  by the basis $\{\omega_1,...,\omega_g\}$ of $\Omega^1(C)$. Let 
$$
\mathcal{C}:\:\left\{F_{h}(\omega_1,...,\omega_g)=0\right\}_h
$$
be a set of equations defining the canonical model in $\mathbb{P}^{g-1}$.
Let $\{\sum_{i=1}^{g}\mu_{j}^{i}\omega_{i}\}_j$ be a basis of $\Omega_{L}^{1}(C)_{\xi}^{\operatorname{Gal}(L/k)}$. We can then take a basis $\omega_{j}' =\sum_{i=1}^{g}\mu_{j}^{i}\omega_{i}$ of $\Omega^{1}(C')$ via an isomorphism as in Lemma \ref{iso}. Thus, we can write
$$
\omega_{i}=\sum_{j=1}^{g}\eta_{j}^{i}\omega_{j}'
$$
for some $\eta_{j}^{i}\in L$. We then obtain equations for the canonical model $\mathcal{C}'$, given by the basis $\{\omega_{j}' \}$, of the twist $C'$ via the substitution
$$
\mathcal{C}':\:\left\{F_{h}(\sum_{j=1}^{g}\eta_{j}^{1}\omega_{j}',...,\sum_{j=1}^{g}\eta_{j}^{g}\omega_{j}')=F'_{h}(\omega'_1,...\omega'_g)=0\right\}_h.
$$
Notice that the projective matrix $\eta=(\eta^{i}_{j})_{ij}$ defines an isomorphism of canonical models $\eta:\,\mathcal{C}'\rightarrow \mathcal{C}$, and that $\eta\cdot^{\sigma}\eta^{-1}=(\xi_{\sigma}^{*})^{-1}$. In general, on a canonical models level, any morphism of curves is given by a matrix, since a morphism of curves induces a linear morphism on the regular differential vector spaces.

\begin{remark} Notice that, for non-hyperelliptic curves, Lemma \ref{iso} is equivalent to prove that the dimension of $\Omega_{L}^{1}(C)_{\xi}^{\operatorname{Gal}(L/k)}$ is equal to $n$, that is, to find a matrix in $\eta\in\text{GL}_{g}(L)$ such that $\eta\cdot^{\sigma}\eta^{-1}=(\xi_{\sigma}^{*})^{-1}$. But this is a consequence of Hilbert $90$th Problem since $(\xi^{*})^{-1}\in\text{H}^1(\text{Gal}(L/k),\text{GL}(\Omega^{1}_{L}(C)))$.
\end{remark}

\section{Description of the method}
\label{13}

Let $C$ be a smooth non-hyperelliptic genus $g$ curve defined
over a perfect field $k$. Assume that its automorphism group $\operatorname{Aut}(C)$ is known and let us denote by $K$ the minimal field where $\operatorname{Aut}(C)$ is defined. We now proceed to describe a method for computing the set of twists of the curve $C$. In each step, we will compute different things:

\subsection*{Step 1: a canonical model} Firstly, we take a basis of $\Omega^{1}(C)$, and via this basis we
obtain a canonical model $\mathcal{C}/k$ as the image of the canonical morphism $C\hookrightarrow\mathbb{P}^{g-1}$. Again, the existence of a canonical divisor defined over $k$ implies that we can take the canonical model $\mathcal{C}$ also defined over $k$. Hence, $\mathcal{C}$ and
$C$ belong to the same class in $\mathrm{Twist}_k\left(\mathcal{C}\right)$
and $\mathrm{Twist}_k\left(\mathcal{C}\right)=\mathrm{Twist}_k\left(C\right)$.

In addition, the automorphisms group $\mathrm{Aut}\left(\mathcal{C}\right)$ 
can be viewed in a natural way as a subgroup of $\operatorname{PGL}_{g}\left(K\right)$ (via the induced automorphism in $\mathbb{P}^{g-1}$ by the canonical morphism).
Indeed, we can see it as a subgroup of $\operatorname{GL}_{g}\left(K\right)$ if we look at its action on $\Omega^1(C)\otimes_{k}K$ as a $K-$vector space.  Furthermore, any isomorphism $\phi:\,\mathcal{C}'\rightarrow \mathcal{C}$ can be also viewed as a matrix in $\operatorname{PGL}_{g}\left(\bar{k}\right)$.

\subsection*{Step 2: the set $\operatorname{Twist}_k(C)$} We will first compute the set $\widetilde{\mathrm{Hom}}\left(G_{k},\Gamma\right)$.
From this set, we will compute $\mathrm{H}^{1}\left(G_{k},\mathrm{Aut}\left(\mathcal{C}\right)\right)$ via the correspondence in Theorem \ref{main}.

Given an element $\Psi\in\widetilde{\mathrm{Hom}}\left(G_{k},\Gamma\right)$,
let $L$ be its splitting field. We have the following isomorphisms: $\Psi(G_K)\simeq\operatorname{Gal}(L/K)$ and  $\Psi(G_k)\simeq\operatorname{Gal}(L/k)$. Hence, we can see $\Psi$
as a proper solution to the Galois embedding problem
\begin{align*}
\xymatrix{
        &            &                   & G_k \ar@{-->>}_-{\Psi}[dl] \ar@{->>}[d]   &    \\
1 \ar[r] &\Psi(G_K) \ar[r]   & \Psi(G_k) \ar[r]  & \operatorname{Gal}(K/k)  \ar[r] &  1
} 
\end{align*}
As it was noticed in Section \ref{11}, we have  $\mathrm{Gal}\left(L/k\right)\simeq\mathrm{Image}\left(\Psi\right)\subseteq\Gamma$
and $\mathrm{Gal}\left(L/K\right)\simeq\Psi\left(G_{K}\right)\subseteq\mathrm{Aut}\left(\mathcal{C}\right)\rtimes\left\{ 1\right\} $.
Hence, we can break the computation of $\widetilde{\mathrm{Hom}}\left(G_{k},\Gamma\right)$, i.e., the solutions (proper or not) to the Galois embedding problem (\ref{gep}), into the computation of the proper solutions to some Galois embedding problems attached to a pair $(G,H)$ as follows
\begin{align}
\xymatrix{
        &            &                   & G_k \ar@{-->>}_-{\Psi}[dl] \ar@{->>}[d]   &    \\
1 \ar[r] &H \ar[r]   & G \ar[r]  & \operatorname{Gal}(K/k)  \ar[r] &  1
} \label{pGH}
\end{align}
where we consider all the pairs $(G,H)$ such that $G\subseteq\Gamma$,
$H=G\cap\mathrm{Aut}\left(C\right)\rtimes\left\{ 1\right\} $ and
$\left[G:H\right]=\left|\mathrm{Gal}\left(K/k\right)\right|$ (up to
conjugacy by elements $\left(\varphi,1\right)\in\Gamma$).

Every proper solution to a Galois embedding problem (\ref{pGH}) can be lifted to a solution to
the Galois embedding problem (\ref{gep}). 

Notice that the same field
$L$ can appear as the splitting field of more than one solution $\Psi$ corresponding to a pair $(G,H)$. This is because given an automorphism $\alpha$ of $\operatorname{Gal}(L/k)$ that leaves $\operatorname{Gal}(K/k)$ fixed, $\alpha\Psi$ is other solution that has $L$ as splitting field. Two such solutions are equivalent if and only if there exists $\beta\in \operatorname{Aut}(\mathcal{C})$ such that $\alpha\Psi=\beta\Psi\beta^{-1}$. So, the number of non-equivalent solutions with splitting field $L$ and $\Psi(\operatorname{G}_k)=G$ is the cardinality $n_{(G,H)}$ of the group \cite{Carp}:
\begin{align}
\mathrm{Aut}_{2}\left(G\right)/\operatorname{Inn}_{G}\left(\mathrm{Aut}\left(\mathcal{C}\right)\rtimes\left\{ 1\right\} \right),\label{numero}
\end{align}
where $\mathrm{Aut}_{2}\left(G\right)$ is the group of automorphisms
of $G$ such that leave the second coordinate invariant and $\operatorname{Inn}\left(\mathrm{Aut}\left(\mathcal{C}\right)\rtimes\left\{ 1\right\} \right)$
is the group of inner automorphisms of $\mathrm{Aut}\left(\mathcal{C}\right)\rtimes\left\{ 1\right\} $
lifted in the natural way to $\mathrm{Aut}\left(G\right)$. 

We can then divide this step in two:

\subsubsection*{Step 2a: computing the pairs $(G,H)$} The pairs $(G,H)$ and the number $n_{(G,H)}$ defined above, can be, for example, computed with magma \cite{magma} (c.f. \cite[Appendix]{thesis} for an implemented code). 

\subsubsection*{Step 2b: computing the proper solutions to the Galois embedding problems (\ref{pGH})} The solutions should be computed case-by-case for each pair $(G,H)$. If $k$ is a finite field this is know how to be done (e.g. \cite[Theorem $1.1$]{Ser}), and the method described in this paper becomes then an algorithm. Unfortunately, if $k$ is a number field there is not known systematical method for solving these problems

Next proposition, that is a generalization of \cite[Lemma $9.6$]{cox} for $q=3$, will be useful for solving some of these Galois embedding problems.

\begin{proposition}\label{q} Let be $q=p^r$, where $p$ is a prime number, let $k$ be a number field, and let $\zeta$ be a fixed $q$-th primitive root of the unity in $\bar{k}$. We denote $K=k(\zeta)$ and we assume $\left[  k(\zeta ):k\right] =p^{r-1}(p-1)$. Let us define $G_q:=\mathbb{Z}/q\mathbb{Z}\rtimes (\mathbb{Z}/q\mathbb{Z})^*$ where the action of $(\mathbb{Z}/q\mathbb{Z})^*$ on $\mathbb{Z}/q\mathbb{Z}$ is given by the multiplication rule $(a,b)(a',b')=(a+ba',bb')$. Let us consider the Galois embedding problem:
\[
\xymatrix{
        &            &                   & G_k  \ar@{->>}[d]_-{\pi}    &    \\
1 \ar[r] &\mathbb{Z}/q\mathbb{Z} \ar[r]   & G_q \ar[r] & (\mathbb{Z}/q\mathbb{Z})^*  \ar[r] &  1
}, 
\]
where the horizontal morphisms are the natural ones, and the projection $\pi$ is given by $\pi(\sigma)=(0,b)$ if $\sigma(\zeta)=\zeta^{b}$. Then, the splitting fields of the proper solutions to this Galois embedding problem are of the form $L=K(\sqrt[q]{m})$ where $m\in \mathcal{O}_k$ is an integer that is not a $p$-power. Moreover, every such field is the splitting field for a solution $\Psi$ to the above Galois embedding problem.
\end{proposition}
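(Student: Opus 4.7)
The plan is to combine Kummer theory with Galois descent. Let $L$ be the splitting field of a proper solution $\Psi$; then $L/K$ is cyclic of order $q$, and since $\zeta:=\zeta_q\in K$, Kummer theory gives $L=K(\sqrt[q]{\alpha})$ for some $\alpha\in K^\times$ not a $p$-th power in $K$. Denote by $\tau\in\operatorname{Gal}(L/K)$ the generator with $\tau(\sqrt[q]{\alpha})=\zeta\sqrt[q]{\alpha}$; via the identification $\operatorname{Gal}(L/k)\simeq G_q$ this $\tau$ generates the normal subgroup $\mathbb{Z}/q\mathbb{Z}$.

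The key construction is the complementary subgroup $H:=\{(0,b)\mid b\in(\mathbb{Z}/q\mathbb{Z})^*\}\subset G_q$ and its fixed field $F:=L^H$. Because $H\cap\langle\tau\rangle=\{1\}$ and $H\cdot\langle\tau\rangle=G_q$, one obtains $[F:k]=q$, $F\cap K=k$, $FK=L$, and the restriction $H\to\operatorname{Gal}(K/k)$ is an isomorphism. Consider next the $\tau$-eigenspace $V:=K\cdot\sqrt[q]{\alpha}\subset L$, a $1$-dimensional $K$-subspace on which $\tau$ acts as multiplication by $\zeta$. Using the semidirect relation $\sigma\tau=\tau^{\chi(\sigma)}\sigma$ (where $\chi(\sigma)=b$ for $\sigma\leftrightarrow(0,b)\in H$) together with $\sigma(\zeta)=\zeta^{\chi(\sigma)}$, a direct computation shows $\tau(\sigma v)=\zeta\cdot\sigma(v)$ for all $v\in V$, so $H$ preserves $V$. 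Since $H$ acts semilinearly on $V$, Speiser's lemma (Galois descent for vector spaces) yields $V\simeq K\otimes_k V^H$, forcing $\dim_k V^H=1$.

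I would then pick any nonzero $\theta\in V^H$. The element $\theta^q$ is fixed by $\tau$ (because $\tau(\theta)^q=\zeta^q\theta^q=\theta^q$) and by $H$ (since $\theta\in F$), so $\theta^q\in L^{G_q}=k$. Moreover $\tau(\theta^{p^{r-1}})=\zeta^{p^{r-1}}\theta^{p^{r-1}}\neq\theta^{p^{r-1}}$ implies $\theta^{p^{r-1}}\notin K$, and hence $K(\theta)=L$. Setting $m:=\theta^q\in k^\times$, a rescaling of $\theta$ by a suitable $c\in k^\times$ (which replaces $m$ by $c^q m$) puts $m$ into $\mathcal{O}_k$; the condition $[L:K]=q$ ensures that $m$ is not a $p$-th power in $K$, and the hypothesis $[K:k]=\varphi(q)$ (which forces $\zeta_p\notin k$, ruling out any degree-$p$ subextension of $K/k$ generated by a $p$-th root of an element of $k$) translates this into $m$ not being a $p$-th power in $k$, as required.

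For the converse, given $m\in\mathcal{O}_k$ not a $p$-th power, the extension $L:=K(\sqrt[q]{m})/k$ is Galois (its $k$-conjugates $\zeta^i\sqrt[q]{m}$ all lie in $L$) of degree $q\varphi(q)$, and the automorphisms $\tau:\sqrt[q]{m}\mapsto\zeta\sqrt[q]{m}$ fixing $K$, together with any lift $\sigma$ of $\operatorname{Gal}(K/k)$ fixing $\sqrt[q]{m}$, satisfy $\sigma\tau\sigma^{-1}=\tau^{\chi(\sigma)}$, exhibiting $\operatorname{Gal}(L/k)\simeq G_q$ and producing the required proper solution $\Psi$. The crux of the whole argument is the descent step: once the $K$-line $V$ is singled out inside $L$ and shown to be $H$-stable, Speiser's lemma produces a $k$-rational generator of $L$ over $K$ without further effort.
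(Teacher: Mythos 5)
Your proof is correct in substance, but it follows a genuinely different route from the paper's. The paper takes a primitive algebraic integer $\alpha$ of the subextension $L^{G}/k$ (where $G$ is the complement $\{(0,b)\}$) and forms the Lagrange resolvents
\[
u_i \;=\; \sum_{j=0}^{q-1}\zeta^{ij}\,\sigma^{-j}(\alpha),
\]
checks $\sigma(u_i)=\zeta^{i}u_i$ and $\tau(u_i)=u_i$ for $\tau\in G$, and then argues by elimination that some $u_j$ with $j>0$ is nonzero (if all vanished, $q\alpha\in\mathcal{O}_k$, contradicting primitivity), from which $m:=u_j^q\in\mathcal{O}_k$ and $L=K(\sqrt[q]{m})$. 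You instead invoke Kummer theory to write $L=K(\sqrt[q]{\alpha})$, identify the $\zeta$-eigenline $V=K\cdot\sqrt[q]{\alpha}$ of $\tau$, verify the semidirect relation $\tau\sigma=\sigma\tau^{b^{-1}}$ makes $V$ stable under the complement $H$, and then apply Speiser's lemma to produce a nonzero $k$-rational $\theta\in V^H$. The two proofs hinge on the same structural fact --- the existence of a nonzero $\zeta$-eigenvector of $\operatorname{Gal}(L/K)$ fixed by a complement of it in $\operatorname{Gal}(L/k)$ --- but the paper manufactures it by hand via resolvents and a contradiction argument, whereas you obtain it abstractly from Galois descent. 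Your version is cleaner conceptually (no case split on vanishing resolvents) at the cost of citing a nontrivial descent theorem; the paper's version is self-contained and makes the integrality of $m$ immediate.

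One organizational slip: the clause about $\zeta_p\notin k$ is placed in the forward direction, where it is not needed --- once $[L:K]=q$ forces $m$ not to be a $p$-th power in $K$, it is trivially not a $p$-th power in the subfield $k$. Where that argument \emph{is} needed is in the converse, which you state but do not justify: to conclude $[K(\sqrt[q]{m}):K]=q$ from ``$m$ is not a $p$-th power in $k$,'' you must rule out $m$ becoming a $p$-th power after base change to $K$, and this is exactly where the observation that $K/k$ is abelian while $k(\sqrt[p]{m})/k$ is non-Galois when $\zeta_p\notin k$ (i.e., for $p$ odd) does the work. The paper glosses over this converse entirely, so you actually supply more of the argument --- you just attached it to the wrong half of the proof.
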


\begin{proof} Notice first that there exist proper solutions $\Psi$ to the Galois embedding problem. Given a field $L=K(\sqrt[q]{m})$ with $m\in k$ and not a $p$-power, there is a natural isomorphism $\operatorname{Gal}(L/k)\simeq G_q$ compatible with the projection $G_q \rightarrow (\mathbb{Z}/q\mathbb{Z})^*$. The natural projection $G_k\twoheadrightarrow \operatorname{Gal}(L/k)$ then provides a solution to the Galois embedding problem above.

 Now, let $\Psi$ be any proper solution to the problem, and let us denote by $L$ its splitting field. Let $G$ be the subgroup of  $G_q$ that contains all the elements of the form $(0,b)$, and let $\sigma\in G_k$ be such that $\Psi(\sigma)=(1,1)$.

Let $\alpha$ be a primitive element of the extension $L^G/k$ that moreover is an algebraic integer. We then have that $L=K(\alpha)$. This is because $\left[K:k\right]=p^{r-1}(p-1)$, $\left[L^G:k\right]=q$ and $L^G\cap K=k$. Let us now define for $i=0,1,...,q-1$ the numbers:
$$
u_i=\alpha+\zeta^i\sigma^{-1}(\alpha)+\zeta^{2i}\sigma^{-2}(\alpha)+...+\zeta^{(q-1)i}\sigma^{-(q-1)}(\alpha).
$$
Then $\sigma(u_i)=\zeta^{i}u_i$ and for any $\tau\in G_k$ such that $\Psi(\tau)=(0,b)$ we have $\Psi(\tau\sigma^{j})=(0,b)(j,1)=(bj,1)(0,b)=\Psi(\sigma^{bj}\tau)$, so $\tau(u_i)=u_i$ . In particular, we have that $u_0,\,u_1^q,...,\,u_{q-1}^q\in\mathcal{O}_k$. Hence, if $u_j\neq 0$ for some $j>0$, we have that $L=K(u_j)$, since $L^G=k(u_j)$. So, if we put $m=u_j^q\in \mathcal{O}_k$, we get $L=K(\sqrt[q]{m})$. Otherwise, that is, if $u_1=u_2=...=u_{q-1}=0$, then $u_0=u_0+u_1+...+u_{q-1}=q\alpha\in \mathcal{O}_k$, what is a contradiction with $\alpha$ being a primitive element of the extension $L^G/k$.

\end{proof}

For each proper solution $\Psi$ to a Galois embbeding problem (\ref{pGH}) attached to a pair $(G,H)$, we trivially compute the corresponding cocycle $\xi$ via the correspondence between the sets (\ref{e1}) and (\ref{e2}) in Theorem \ref{main}.

\subsection*{Step 3: Equations} We want to compute equations for a twist corresponding to a given cocycle $\xi$.
For this purpose we use the method explained in Section \ref{12}. Computing equations for a twist turns out to be equivalent to computing an isomorphism $\phi:\,C'\rightarrow C$, that is, to explicitly computing the inverse map to the correspondence between sets (\ref{e3}) and (\ref{e1}) in Theorem \ref{main}.

\section{An example}
\label{14}
In order to illustrate the method, we will apply it to the smooth non-hyperelliptic genus $6$ curve which admits the affine plane model
$$
C:\,x^7-y^3-1=0.
$$
As the point at infinity is singular, the projectivization of this plane model is not smooth. However, there is a unique curve, up to $\mathbb{Q}$-isomorphism, which is smooth and birationally equivalent to $C$. So, they have the same function field. We will apply the method for this smooth curve, which is non-hyperelliptic and has genus equal to $6$.
\subsection*{Step 1} First, we must find a canonical model by the usual procedure: finding a basis of holomorphic differentials. Let us call $X=x/z$ and $Y=y/z$. One has
$$
\operatorname{div}(X)=(0:-1:1)+(0:-\zeta_3:1)+(0:-\zeta_3^2:1)-3(0:1:0)=P_1+P_2+P_3-3\infty,
$$
$$
\operatorname{div}(Y)=Q_1+Q_2+Q_3+Q_4+Q_5+Q_6+Q_7-7\infty,
$$
where $Q_i=(\zeta_7^i:0:1)$. Then, $dX$ is an uniformizer for all points except for the $Q_i$'s, because the tangent space to the curve at these points have equation $X-\alpha=0$ for some $\alpha\in\bar{k}$. Then, for the points $Q_i$'s we have to use the expression
$$
dX=-\frac{3y^2}{7x^6}dY
$$
Thus, by \cite[Proposition $4.3$]{Sil}, we finally get
$$
\operatorname{div}(dX)=2(Q_1+Q_2+Q_3+Q_4+Q_5+Q_6+Q_7)-4\infty.
$$
We obtain the following basis of holomorphic differentials:
$$
\omega_1=\frac{dX}{Y^2},\,\,\omega_2=\frac{XdX}{Y^2},\,\,\omega_3=\frac{X^2dX}{Y^2},\,\,\omega_4=\frac{dX}{Y},\,\,\omega_5=\frac{X^3dX}{Y^2},\,\,\omega_6=\frac{XdX}{Y}.
$$

We consider the rational map
$$C\rightarrow\mathbb{P}^5:\,(x:y:z)\rightarrow (z^3:xz^2:x^2z:yz^2:x^3:xyz)$$
The ideal of the image of this map clearly contains the homogeneous polynomials:
$$
\begin{aligned}
f_1&=\omega_1\omega_6-\omega_2\omega_4,&f_2&=\omega_{2}^{2}-\omega_1\omega_3,&
f_3&=\omega_2\omega_3-\omega_1\omega_5,&f_4&=\omega_2\omega_5-\omega_{3}^{2}\\
f_5&=\omega_2\omega_6-\omega_3\omega_4,&f_6&=\omega_3\omega_6-\omega_4\omega_5,&
f_7&=\omega_{4}^{3}-\omega_{3}^{2}\omega_5+\omega_{1}^{3},&
f_8&=\omega_{5}^{3}-\omega_{4}\omega_{6}^{2}-\omega_{1}\omega_{2}^{2}.
\end{aligned}
$$
Now, we claim that the ideal generated by these polynomials gives a smooth curve. To see this, note that, if $\omega_1\neq 0$, the deshomogenization of this ideal with respect to $\omega_1$ gives the affine curve $C$. Now, we isolate from $f_2$ and $f_3$ the variables $\omega_3$ and $\omega_5$ and we plug them into $f_7$. Therefore, $C$ is birationally equivalent to $\mathcal{C}\cap\{ \omega_1\neq 0\}$. Next, if $\omega_1=0$, then the vanishing locus of $f_2,f_4,f_7,f_8$ is the point $(0:0:0:0:0:1)$. To check that $\mathcal{C}$ is non-singular at this point we consider the partial derivatives of the polynomials: $f_1,f_5,f_6,f_8$. Thus, $\mathcal{C}$ is a canonical model of the initial smooth non-hyperelliptic genus $6$ curve.

The automorphism group $\operatorname{Aut}(C)$ is generated by the automorphisms \cite{Swi}:
$$
(x:y:z)\rightarrow (x:\zeta_3y:z)\,\operatorname{and}\, (x:y:z)\rightarrow (\zeta_7x:y:z).
$$
Then, the automorphism group of the canonical model $\mathcal{C}$ is generated by the matrices in $\operatorname{PGL}_{6}\left(\bar{\mathbb{Q}}\right)$:
\small
$$
r=\left(\begin{array}{cccccc} \zeta_{3}&0&0&0&0&0\\ 0&\zeta_3&0&0&0&0\\0&0&\zeta_{3}&0&0&0\\0&0&0&\zeta_{3}^{2}&0&0\\0&0&0&0&\zeta_3&0\\0&0&0&0&0&\zeta_{3}^{2}\end{array}\right),\, s=\left( \begin{array}{cccccc} \zeta_{7}&0&0&0&0&0\\ 0&\zeta_{7}^{2}&0&0&0&0\\0&0&\zeta_{7}^{3}&0&0&0\\0&0&0&\zeta_7&0&0\\0&0&0&0&\zeta_{7}^{4}&0\\0&0&0&0&0&\zeta_{7}^{2}\end{array}\right).
$$
\normalsize

\subsection*{Step 2a} Let $k$ be a number field and consider the curve $\mathcal{C}/k$. We want to compute its twists over $k$. Let $K=k(\zeta_7,\,\zeta_3)$ and assume that $\left[K:k\right]=12$. Then, we compute using MAGMA the following possibilities for the pairs $(G,H)$: 

\vskip 0.5truecm

\begin{center}
\begin{tabular}{|c|c|c|c|c|}
\hline
 & $ \operatorname{ID}(G) $ \rule[0.4cm]{0cm}{0cm} & $ \operatorname{ID}(H)$ & $\operatorname{gen}(H)$ &$n_{(G,H)}$\tabularnewline
\hline
$1$ & $<12,5>$ & $<1,1>$ & $1$ & $1$\tabularnewline
\hline
$2$ & $<36,12>$ & $<3,1>$ & $r$ & $2$\tabularnewline
\hline
$3$ & $<84,7>$ & $<7,1>$ & $s$& $6$ \tabularnewline
\hline
$4$ & $<252,26>$ & $<21,2>$ & $r,\,s$ & $12$\tabularnewline
\hline

\end{tabular}
\end{center}

\vspace{3mm}
The fourth column in this table exhibits generators of the group $H$. In all the cases $G$ is the group generated by the elements $(g,1)$, for $g$ in $H$, together with the elements $(1,\tau_1)$ and $(1,\tau_2) $, where $\tau_1$ is the element in $\operatorname{Gal}(K/k)$ which sends $\zeta_3$ into $\zeta_3^2$ and $\zeta_7$ into $\zeta_7$, and $\tau_2$ is the element which sends $\zeta_3$ into $\zeta_3$ and $\zeta_7$ into $\zeta_7^3$. The fifth column exhibits the cardinality of the set in Formula (\ref{numero}) for each pair $(G,H)$.

\subsection*{Step 2b} Now, we have to find the proper solutions to the Galois embedding problems associated to each of the pairs $(G,H)$.

\begin{enumerate}
\item[1.] The first case is clear: $L=K$.
\item[2.] For the second one, note that $L=k(\zeta_7)M$, where $M/k$ is a solution to the Galois embedding problem in Proposition \ref{q} with $q=3$. Hence, $L=k(\zeta_3,\,\zeta_7,\,\sqrt[3]{m})$, for some $m\in\mathcal{O}_k$ that is not a $3$-power.
\item[3.] In this case, we can write $L=k(\zeta_3)M$, where $M/k$ is a solution to the Galois embedding problem in Proposition \ref{q} with $q=7$. Hence, $L=k(\zeta_3,\,\zeta_7,\,\sqrt[7]{n})$, for some $n\in\mathcal{O}_k$ that is not a $7$-power.
\item[4.] In the last case, $L=M_1M_2$, where $M_i/k$  is a solution to the Galois embedding problem in Proposition \ref{q} with $q=3,7$. Hence, $L=k(\zeta_3,\,\zeta_7,\,\sqrt[3]{m},\,\sqrt[7]{n})$, for some $m,n\in\mathcal{O}_k$, where $m$ is not a $3$-power and $m$ is not a $7$-power.
\end{enumerate}

\subsection*{Step 3} Fir each previous field $L$, we will compute equations of a twist that has $L$ as splitting field. The other twists, with splitting field $L$, will be then easily computed by considering symmetries. Let us consider a solution $\Psi$ (that is, a particular twist) to the Galois embedding problem with pair $(G,H)$ and splitting field $L$ by fixing an isomorphism between the group $H$ and the group $\text{Gal}(L/K)$: 
$$
(r,1):\,\sqrt[3]{m},\,\sqrt[7]{n}\rightarrow \zeta_3\sqrt[3]{m},\,\sqrt[7]{n},
$$
$$
(s,1):\,\sqrt[3]{m},\,\sqrt[7]{n}\rightarrow \sqrt[3]{m},\,\zeta_7\sqrt[7]{n}.
$$

Now, we compute equations for a twist in each case:
\begin{enumerate}
\item[1.] Clearly, this solution gives us the trivial twist, so we have the curve $\mathcal{C}/k$.
\item[2.] The correspondence between the sets (\ref{e1}) and (\ref{e2}) gives us the cocycle given by $\xi_{\tau_1}=1$, $\xi_{\tau_2}=1$ and $\xi_{(r,1)}=r$. If we take the basis of $\Omega_L^1(\mathcal{C})$ given by $\left\{(a,b,c,i)\right\}:=\left\{ \sqrt[3]{m^a}\zeta_3^b\zeta_7^c\omega_i\right\}$ where $a,b\in\{0,1,2\}$, $c\in\{0,1,...,6\}$ and $i\in\{1,...,6\}$, we obtain the twisted action of $\operatorname{Gal}(L/k)$ on $\Omega_L^1(\mathcal{C})$ given in Section \ref{12}:

$$
\tau_1(a,b,c,i)=(a,2b,c,i),\,\tau_2(a,b,c,i)=(a,b,3c,i)
$$

$$
(r,1)(a,b,c,i)=\begin{cases}(a,a+b+2,c,i)&\text{if}\,\,\,i=4,6\\ (a,a+b+1,c,i)&\text{otherwise}\end{cases}
$$

Now, we use formula (\ref{CalcBase}) and get a basis of $\Omega^{1}(\mathcal{C}')\simeq\Omega_{L}^{1}(\mathcal{C})_{\xi}^{\operatorname{Gal}(L/k)}$ given by:
$$
\left\{\sqrt[3]{m^2}\omega_1,\,\sqrt[3]{m^2}\omega_2,\,\sqrt[3]{m^2}\omega_3,\,\sqrt[3]{m}\omega_4,\,\sqrt[3]{m^2}\omega_5,\,\sqrt[3]{m}\omega_6 \right\}.
$$
So we get the generators of the ideal defining the twist:
$$
\begin{aligned}
\omega_1\omega_6&-\omega_2\omega_4,&\omega_{2}^{2}&-\omega_1\omega_3,&
\omega_2\omega_3&-\omega_1\omega_5,&\omega_2\omega_5&-\omega_{3}^{2},\\
\omega_2\omega_6&-\omega_3\omega_4,&\omega_3\omega_6&-\omega_4\omega_5,&
m\omega_{4}^{3}-\omega_{3}^{2}&\omega_5+\omega_{1}^{3},&
\omega_{5}^{3}-m\omega_{4}\omega_{6}^{2}-&\omega_{1}\omega_{2}^{2}
\end{aligned}
$$
We obtain generators for the other solution $\Psi$ that has $L$ as splitting field by exchanging $m$ by $m^2$.
\item[3.] In this case, the correspondence between the sets (\ref{e1}) and (\ref{e2}) gives us the cocycle given by $\xi_{\tau_1}=1$, $\xi_{\tau_2}=1$ and $\xi_{(s,1)}=s$. If we take the basis of $\Omega_L^1(\mathcal{C})$ given by $\left\{(a,b,c,i)\right\}:=\left\{ \sqrt[7]{n^a}\zeta_3^b\zeta_7^c\omega_i\right\}$, where $a,c\in\{0,1,...,6\}$, $b\in\{0,1,2\}$ and $i\in\{1,...,6\}$, we obtain the twisted action of $\operatorname{Gal}(L/k)$ on it given in Section \ref{12}:

$$
\tau_1(a,b,c,i)=(a,2b,c,i),\,\tau_2(a,b,c,i)=(a,b,3c,i)
$$

$$
(r,1)(a,b,c,i)=\begin{cases}(a,a+b+1,c,i)&\text{if}\,\,\,i=1,4\\ (a,a+b+2,c,i)&\text{if}\,\,\,i=2,6\\ (a,a+b+3,c,i)&\text{if}\,\,\,i=3\\ (a,a+b+4,c,i)&\text{if}\,\,\,i=5\end{cases}
$$

Now, we use formula (\ref{CalcBase}) again and get a basis of $\Omega^{1}(\mathcal{C}')\simeq\Omega_{L}^{1}(\mathcal{C})_{\xi}^{\operatorname{Gal}(L/k)}$ given by
$$
\left\{\sqrt[7]{n^6}\omega_1,\,\sqrt[7]{n^5}\omega_2,\,\sqrt[7]{n^4}\omega_3,\,\sqrt[7]{n^6}\omega_4,\,\sqrt[7]{n^3}\omega_5,\,\sqrt[7]{n^5}\omega_6 \right\}.
$$
Then, we get the set of generators of the ideal defining the twist:
$$
\begin{aligned}
\omega_1\omega_6&-\omega_2\omega_4,&\omega_{2}^{2}&-\omega_1\omega_3,&
\omega_2\omega_3&-\omega_1\omega_5,&\omega_2\omega_5&-\omega_{3}^{2},\\
\omega_2\omega_6&-\omega_3\omega_4,&\omega_3\omega_6&-\omega_4\omega_5,&
\omega_{4}^{3}-n\omega_{3}^{2}&\omega_5+\omega_{1}^{3},&
n\omega_{5}^{3}-\omega_{4}\omega_{6}^{2}-&\omega_{1}\omega_{2}^{2}
\end{aligned}
$$
We compute generators for the other solutions $\Psi$ that have splitting field equal to $L$ by exchanging $n$ by $n^2, \,n^3,\,n^4,\,n^5,\,n^6$.
\item[4.] In the last case, we have the cocycle given by $\xi_\tau=1$, $\xi_{(r,1)}=r$ and $\xi_{(s,1)}=s$. We take the basis of $\Omega_L^1(\mathcal{C})$ given by $\left\{(a,b,c,d,i)\right\}:=\left\{ \sqrt[3]{m^a}\sqrt[7]{n^b}\zeta_3^c\zeta_7^d\omega_i\right\}$ where $a,c\in\{0,1,2\}$,  $b,d\in\{0,...,6\}$ and $i\in\{1,...,6\}$, and we consider on $\Omega_L^1(\mathcal{C})$ the twisted action of $\operatorname{Gal}(L/k)$ given in Section \ref{12}.
Thus, formula (\ref{CalcBase}) provides a basis of $\Omega^{1}(\mathcal{C}')\simeq\Omega_{L}^{1}(\mathcal{C})_{\xi}^{\operatorname{Gal}(L/k)}$ given by:
$$
\left\{\sqrt[3]{m^2}\sqrt[7]{n^6}\omega_1,\,\sqrt[3]{m^2}\sqrt[7]{n^5}\omega_2,\,\sqrt[3]{m^2}\sqrt[7]{n^4}\omega_3,\,\sqrt[3]{m}\sqrt[7]{n^6}\omega_4,\,\sqrt[3]{m^2}\sqrt[7]{n^3}\omega_5,\,\sqrt[3]{m}\sqrt[7]{n^5}\omega_6 \right\}.
$$
Then, we get the set of generators of the ideal defining the twist
$$
\begin{aligned}
\omega_1\omega_6&-\omega_2\omega_4,&\omega_{2}^{2}&-\omega_1\omega_3,&
\omega_2\omega_3&-\omega_1\omega_5,&\omega_2\omega_5&-\omega_{3}^{2},\\
\omega_2\omega_6&-\omega_3\omega_4,&\omega_3\omega_6&-\omega_4\omega_5,&
m\omega_{4}^{3}-n\omega_{3}^{2}&\omega_5+\omega_{1}^{3},&
n\omega_{5}^{3}-m\omega_{4}\omega_{6}^{2}-&\omega_{1}\omega_{2}^{2}
\end{aligned}
$$
We compute generators for the other solutions $\Psi$ that have $L$ as splitting field by exchanging $m$ and $n$ by $m,\,m^2$ and $n,\,n^2, \,n^3,\,n^4,\,n^5,\,n^6$.
\end{enumerate}

We can summarize these results as follows: 
\begin{proposition} The twists of the curve $\mathcal{C}/k$ defined above where $k$ is a number field such that $\left[k(\zeta_{21}):k\right]=12$, are in one-to-one correspondence with the curves given by the ideals generated by the following  homogeneous polynomials:
$$
\begin{aligned}
\omega_1\omega_6&-\omega_2\omega_4,&\omega_{2}^{2}&-\omega_1\omega_3,&
\omega_2\omega_3&-\omega_1\omega_5,&\omega_2\omega_5&-\omega_{3}^{2},\\
\omega_2\omega_6&-\omega_3\omega_4,&\omega_3\omega_6&-\omega_4\omega_5,&
m\omega_{4}^{3}-n\omega_{3}^{2}&\omega_5+\omega_{1}^{3},&
n\omega_{5}^{3}-m\omega_{4}\omega_{6}^{2}-&\omega_{1}\omega_{2}^{2}
\end{aligned}
$$
where $m\in\mathcal{O}_{k}^{*}/(\mathcal{O}_{k}^{*})^3$ and $n\in\mathcal{O}_{k}^{*}/(\mathcal{O}_{k}^{*})^7$.
Equivalently, we can consider the (singular) plane models
$$
nx^7-my^3z^4-z^7=0.
$$
\end{proposition}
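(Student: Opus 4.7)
The plan is to assemble the proposition as a clean summary of the computations carried out in Steps 1--3 of this section, with essentially no new calculation required.

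First I would invoke Theorem \ref{main} together with Step 2a: every equivalence class of twists of $\mathcal{C}$ arises from a proper solution to the Galois embedding problem (\ref{gep}) attached to exactly one of the four pairs $(G,H)$ tabulated above, and $n_{(G,H)}$ counts the inequivalent solutions whose splitting field is any fixed $L$. By Step 2b, which applies Proposition \ref{q} with $q = 3$ and $q = 7$, all such splitting fields take the uniform form $L = k(\zeta_3, \zeta_7, \sqrt[3]{m}, \sqrt[7]{n})$, the three reduced cases being recovered by allowing $m = 1$, $n = 1$, or $m = n = 1$ respectively.

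Next I would observe that the eight generators exhibited in case~$4$ of Step~$3$ specialize, upon setting $m = 1$ or $n = 1$, to the generators of cases $3$, $2$, and $1$. Hence one master family of ideals indexed by pairs $(m, n)$ captures every equivalence class. Within a fixed $L$, the $n_{(G,H)} = 12$ inequivalent cocycles correspond precisely to replacing $(m, n)$ by $(m^{a}, n^{b})$ with $a \in \{1, 2\}$ and $b \in \{1, 2, \dots, 6\}$, as already recorded case by case. Combining the freedom to vary $L$ with the freedom to vary the cocycle within each $L$ yields the bijection between $\operatorname{Twist}_k(\mathcal{C})$ and $\mathcal{O}_k^*/(\mathcal{O}_k^*)^3 \times \mathcal{O}_k^*/(\mathcal{O}_k^*)^7$ claimed in the statement.

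Finally, to justify the alternative plane description $n x^7 - m y^3 z^4 - z^7 = 0$, I would rerun Step~$1$ on the affine equation $n x^7 - m y^3 = 1$. The divisors of $X$, $Y$ and $dX$ change only by explicit scalar factors, so a basis of holomorphic differentials is obtained from $\{\omega_1, \dots, \omega_6\}$ by multiplication by appropriate powers of $\sqrt[3]{m}$ and $\sqrt[7]{n}$. Under this rescaling the six quadratic generators of the canonical ideal are unaffected, while the two cubic generators pick up exactly the factors $m$ and $n$ in the positions shown. The only mildly delicate point, which I would handle by appealing once more to the injectivity embedded in Proposition \ref{q}, is that distinct classes of $(m, n)$ in the displayed quotient do indeed produce pairwise non-$k$-isomorphic twists; this will be the main, but still modest, obstacle.
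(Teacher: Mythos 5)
Your proposal follows exactly the paper's route: the proposition has no separate proof in the text (it is introduced with ``We can summarize these results as follows''), so the argument is precisely the assembly of Steps 1--3 that you describe, with the family of ideals from case~$4$ specializing to cases $1$--$3$ by setting $m=1$ and/or $n=1$. One small difference: for the plane model $nx^7-my^3z^4-z^7=0$ the paper does not rerun Step~1; it is seen by inspection via the same parametrization $(x:y:z)\mapsto(z^3:xz^2:x^2z:yz^2:x^3:xyz)$, under which $m\omega_4^3-n\omega_3^2\omega_5+\omega_1^3=-z^2(nx^7-my^3z^4-z^7)$ and $n\omega_5^3-m\omega_4\omega_6^2-\omega_1\omega_2^2=x^2(nx^7-my^3z^4-z^7)$, so your re-derivation is a valid but longer alternative. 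Finally, the injectivity you flag --- that inequivalent classes of $(m,n)$ give pairwise non-$k$-isomorphic twists --- is not contained in Proposition~\ref{q} (which only yields surjectivity onto splitting fields) and is left implicit in the paper as well; filling it requires the additional observation that $(K^*)^q\cap k^*=(k^*)^q$ for $q=3,7$, which holds here because $K/k$ is abelian while $k(\sqrt[q]{m})/k$ is non-normal for $m\notin(k^*)^q$ when $\zeta_q\notin k$, together with the fact that the $n_{(G,H)}$ non-equivalent cocycles over a fixed $L$ are matched by the permitted exponents of $m$ and $n$.
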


\end{document}